\documentclass[12pt,reqno]{amsart}
\usepackage{tikz}
\usepackage{hyperref}
\usepackage{centernot}
\usepackage{ amssymb }

\newcommand \C{\mathcal{C}}
\newcommand \G{\mathcal{G}}

\newcommand \D{\mathcal{D}}

\newcommand{\ass}{\operatorname{Ass}}

\newtheorem{theorem}{Theorem}[section]
\newtheorem{definition}[theorem]{Definition}

\newtheorem{lemma}[theorem]{Lemma}

\newtheorem{remark}[theorem]{Remark}

\newtheorem{corollary}[theorem]{Corollary}

\newtheorem*{notation*}{Notation}

\setlength{\textheight}{23cm}
\setlength{\textwidth}{16cm}
\setlength{\topmargin}{-0.8cm}
\setlength{\parskip}{0.3\baselineskip}
\hoffset=-1.4cm

\begin{document}

\title{ordinary and symbolic powers of edge ideals of weighted oriented graphs}

\author[A.~Banerjee]{Arindam Banerjee}
\email{123.arindam@gmail.com}
\address{Ramakrishna Mission Vivekenanda Educational and Research Institute, Belur, West
	Bengal, India}

\author[K. K. Das]{Kanoy Kumar Das}
\email{kanoydas0296@gmail.com}
\address{Ramakrishna Mission Vivekananda Educational and Research Institute, Belur, West
	Bengal, India}

\author[S. Selvaraja]{S. Selvaraja}
\email{selva.y2s@gmail.com, sselvaraja@cmi.ac.in}
\address{Chennai Mathematical Institute, H1, SIPCOT IT Park, Siruseri, Kelambakkam, Chennai 603103, Tamil Nadu, India}

\maketitle

\begin{abstract}
  Let $\D$ be a weighted oriented graph and $I(\D)$ be its edge ideal. 
  In this paper, we 
  show that all the symbolic and ordinary powers of 
  $I(\D)$ coincide when $\D$ is a 
  weighted oriented certain class of tree.
  Finally, we give  necessary and sufficient conditions for the equality of 
  ordinary and symbolic powers of naturally oriented lines. 
\end{abstract}
\section{Introduction}

Symbolic powers of ideals have been studied intensely over the last two decades.
We refer the
reader to \cite{DDAGHN} for a review of results in the literature.
Let $R = k[x_1,\ldots,x_n]$ be the polynomial ring in $n$ variables over a field $k$ 
and $I$ be a homogeneous ideal in $R$. 
Then the $s$-th symbolic powers of $I$ is defined by 
$I^{(s)}=\bigcap\limits_{\mathfrak{p} \in \ass(I)}I^sR_{\mathfrak{p}} \cap R$, where 
$\ass(I)$ is the set of  associated prime ideals of $I$. 
Comparing the ordinary and symbolic powers of an ideal becomes a natural question. 
In general, the question of when the symbolic and ordinary powers of a given ideal coincide
is open. There are conditions on $I$ that are equivalent to $I^{(s)} = I^s$ 
for all $s \geq 1$ 
given by Hochster \cite{H73} when $I$ is prime, and 
generalized by Li and Swanson \cite{LS06} to the case
when $I$ is a radical ideal. Other sufficient conditions are known, and the question is 
completely settled in certain cases. However, the above question is still open even for 
monomial ideals. In this paper we study the 
ordinary and symbolic powers of edge ideals of weighted oriented graphs.

Let $G = (V(G), E(G))$ be a finite simple 
(no loops, no multiple edges) undirected graph. A weighted oriented graph $\D$ whose underlying graph is $G$, 
is a triplet $(V(\D), E(\D), w)$ where $V(\D) = V(G)$, $E(\D) \subseteq V(\D) \times V(\D)$ such that 
$\{\{x, y\} \mid \{x, y\} \in E(\D)\} = E(G)$ and $w$ is a function $w : V(\D) \longrightarrow \mathbb{N}$. 
The weight of $x_i \in V(\D)$ is $w(x_i)$ or $w_{i}$. 
An ordered pair $\{x,y\}\in E(\D)$ if there is a directed edge from the vertex $x$ to $y$.
Let $\D = (V(\D) , E(\D), w)$ denote a weighted oriented graph with vertices
$V(\D) = \{x_1,\ldots,x_n\}$. By identifying the vertices with the variables in the polynomial ring 
$R=k[x_1,\ldots, x_n],$  we can associate to
each weighted oriented graph $\D$ a monomial ideal $I(\D)$ generated by the set 
$\{x_i x_j^{w(x_j)} \mid \{x_i , x_j \}  \in E (\D) \}$.
The ideal $I(\D)$ is called the \textit{edge ideal} of $\D$. 
These ideals generalize the usual edge ideals of graphs, since if
$w(x)=1$ for all $x \in V(\D)$, then $I(\D)=I(G)$.
The interest in studying edge ideals of weighted oriented graphs has its foundation in coding theory, 
specifically in the study of Reed-Muller type codes (see \cite{ReedMuller}).
These edge ideals of weighted oriented graphs appears as initial ideals of vanishing ideals of 
projective spaces over finite fields.
Since then, the researchers have been investigating the connection
between the combinatorial properties of the weighted oriented graphs and the algebraic
properties of the corresponding edge ideals (\cite{HLMRV19}, \cite{ReedMuller}).

In \cite{BKS}, the authors proved that if $I(\D)$ is the edge ideal
of a weighted oriented graph $\D$, then $I(\D)$ has a linear resolution if and only if 
all powers
of $I(\D)$ have linear resolution.
As an immediate consequence, we have if $I(\D)$ has a linear resolution, then
$I(\D)^{(s)}=I(\D)^s$ for all $s \geq 1$.
In \cite{BBKMS}, the authors 
proved that the equality of ordinary and symbolic powers of edge
ideals of weighted oriented graphs when the underlying simple graph is either a complete
graph or a complete bipartite graph.
Mandal and Pradhan studied the symbolic powers of 
edge ideals of weighted oriented graphs.
Recently, they showed that 
if $\D$ is any weighted oriented star graph or some specific weighted naturally 
oriented path, then $I(\D)^s=I(\D)^{(s)}$ for all $s \geq 2$,
(\cite{MandalPradhan}, \cite{2105.11429}). In this paper, we study the 
ordinary and symbolic powers of edge ideals of some classes of oriented tree.
Recall that a \textit{tree} is a graph in which there exists a unique path between every pair of 
distinct vertices. A \textit{rooted tree} is a tree together with a fixed 
vertex called the root. In particular, 
in a rooted tree there exists a unique path from the root to any given vertex. 
More precisely, we prove the following. Let $\mathcal D$ be a weighted oriented graph with $w(v)>1$ for all 
$v\in V(\D)$. 
If $I(\D)=(xy^{w(y)}, yz^{w(z)}) + I(\D')$ where $\D'$ is an oriented rooted tree with
root at the vertex $z$, then $I(\D)^{(s)}=I(\D)^s$ for all $s\geq 2$ (Theorem \ref{forest}).
Finally, we completely characterize weighted oriented uni-directed lines for 
which all the ordinary and symbolic powers coincide. More precisely, 
let $\mathcal D$ be a weighted oriented line with 
$V(\D)=\{x_1, x_2, x_3, \ldots , x_n\}$ and 
$E(\D)=\{(x_i,x_{i+1}) \mid 1\leq i\leq n-1\}$. 
Then $I(\D)^{(s)}=I(\D)^s$ for all $s\geq 1$ if and only if the following condition is 
satisfied : if $w_j\geq 2$ for some $1\leq j\leq n$ then $w_i\geq 2$ for all 
$j\leq i\leq n-1$ (Theorem \ref{main line}).

\section{Preliminaries}\label{preliminaries}

In this section, we set up the basic definitions and notation needed for the main results.
If $x$ is a vertex of $\D$, we define $N_{\D}^+(x)=\{y\in \D \mid \{x,y\}\in E(\D)\}$ and 
$N_{\D}^-(x)=\{y\in \D \mid \{y,x\}\in E(\D)\}$ to be the $out$-$neighbourhood$ and the 
$in$-$neighbourhood$ of $x$. Then the neighbourhood of $x$ is the set 
$N_{\D}(x)=N_{\D}^+(x)\cup N_{\D}^-(x)$. A non-isolated vertex $x\in V(\D)$ is said to be source 
(respectively, sink) if it does not have any in-neighbourhood (respectively, out-neighbourhood).
	
	We now recall some useful notions about weighted oriented graphs as 
	developed in \cite{PitonesReyesToledo} which is key to this paper.
\begin{definition}\cite[Definition 4]{PitonesReyesToledo}
Let $\D$ be an weighted oriented graph, and let $\G$ be its underlying graph. Given a vertex cover $C$ of $\G$, we define 
\begin{align*}
 L_1(C)&=\{x\in C | N_{\D}^+(x)\cap C^c\neq \phi\},\\
 L_2(C)&=\{x\in C |x\notin L_1(C) \text{ and } N_{\D}^-(x)\cap C^c\neq \phi\},\\
L_3(C)&=C\setminus(L_1(C)\cup L_2(C)),
\end{align*}
where $C^c$ is the complement of $C$.
\end{definition}
	
	\begin{definition}\cite[Definition 7]{PitonesReyesToledo}
		A vertex cover $C$ of $\D$ is said to be strong if for each $x\in L_3(C)$ 
		there is $(y,x)\in E(D)$ such that $y\in L_2(C)\cup L_3(C)$ and 
		$w(y)\geq 2$.
	\end{definition}
	\begin{remark}
		Any minimal vertex cover of $\D$ is a strong vertex cover.
	\end{remark}

	\begin{definition}\cite[Definition 19]{PitonesReyesToledo}
		Let $C$ be a strong vertex cover of $\D$. The irreducible ideal 
		associated to $C$ is the ideal 
		$$I_C:=(L_1(C)\cup \{x_j^{w_j}|x_j\in L_1(C)\cup L_2(C)\})$$
	\end{definition}
	\begin{remark}\cite[Lemma 20]{PitonesReyesToledo}
		$I(\D)\subseteq I_C$ for all strong vertex cover $C$ of $\D$.
	\end{remark}
	
	\begin{theorem}\cite[Theorem 25]{PitonesReyesToledo}\label{irrdec}
		Let $\mathcal C_S(\D)$ be the set of all strong vertex covers of $\D$. 
		Then the irredundant irreducible decomposition of 
		$I(\D)$ is given by $$I(\D)=\bigcap_{C\in \mathcal{C}_S(\D)}I_C.$$ In particular, there is an one to one correspondence between the associated primes of $I(\D)$ and the strong vertex covers of $\D$.
	\end{theorem}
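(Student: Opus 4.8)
The plan is to treat Theorem~\ref{irrdec} as an instance of the general theory of irreducible decompositions of monomial ideals: every monomial ideal has a \emph{unique} irredundant decomposition as a finite intersection of irreducible monomial ideals (ideals generated by pure powers of variables), each irreducible component $Q$ has radical $\sqrt{Q}$ a monomial prime, and $\ass(R/I)$ is exactly the set of these radicals. First I would record that each $I_C$ is indeed irreducible: discarding the redundant generators $x_j^{w_j}$ with $x_j\in L_1(C)$, we have $I_C=(x_i \mid x_i\in L_1(C))+(x_j^{w_j}\mid x_j\in L_2(C))$, a pure-power ideal with radical $\mathfrak p_C:=(L_1(C)\cup L_2(C))$. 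The inclusion $I(\D)\subseteq\bigcap_{C\in\mathcal C_S(\D)}I_C$ is exactly the cited containment $I(\D)\subseteq I_C$ (Lemma~20), so the whole theorem reduces to the reverse inclusion, irredundancy, and the identification of $\ass(R/I(\D))$ with $\{\mathfrak p_C\}$.

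The main step is the reverse inclusion $\bigcap_C I_C\subseteq I(\D)$, which I would prove by contraposition: given a monomial $m=\prod_i x_i^{a_i}\notin I(\D)$, construct a strong vertex cover $C$ with $m\notin I_C$. Unwinding the definitions, $m\notin I_C$ means exactly that $a_i=0$ for every $x_i\in L_1(C)$ and $a_j<w_j$ for every $x_j\in L_2(C)$; equivalently, every vertex carrying a positive exponent must avoid $L_1(C)$, and every \emph{saturated} vertex (one with $a_j\ge w_j$) must avoid $L_1(C)\cup L_2(C)$, i.e.\ land in $L_3(C)\cup C^c$. The hypothesis $m\notin I(\D)$ says that for each edge $(x_i,x_j)$ either $a_i=0$ or $a_j<w_j$; in particular every in-neighbour of a saturated vertex has exponent $0$. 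I would build $C$ by declaring its complement to be a maximal independent set $W$ adapted to the exponent vector of $m$ (roughly, $W$ absorbs empty vertices and the in-neighbours of saturated vertices), so that $C=V(\D)\setminus W$ is a vertex cover realizing the two conditions above. Verifying the $L_1$- and $L_2$-conditions from the choice of $W$ yields $m\notin I_C$; the genuinely delicate point---the main obstacle of the argument---is to arrange $W$ so that the resulting cover is \emph{strong}, i.e.\ every $x\in L_3(C)$ has an in-neighbour $y\in L_2(C)\cup L_3(C)$ with $w(y)\ge 2$. This is where the weights and the combinatorics of $m$ must be made to interact, and it is the crux on which the construction stands or falls.

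With both inclusions in hand, I would prove irredundancy by producing, for each strong cover $C$, a witness monomial that is standard for $I_C$ (escapes $I_C$) yet lies in $I_{C'}$ for every other strong cover $C'\neq C$: the natural candidate carries exponent $0$ on $L_1(C)$, exponent $w_j-1$ on each $x_j\in L_2(C)$, and large exponents elsewhere, so that it escapes $I_C$ while its membership in the remaining components is forced by $C'$ being a genuine strong cover. By uniqueness of the irredundant irreducible decomposition this shows $\{I_C\mid C\in\mathcal C_S(\D)\}$ is precisely the set of irreducible components and that $C\mapsto I_C$ is a bijection onto them. Finally, invoking $\ass(R/I(\D))=\{\sqrt Q\mid Q \text{ an irreducible component}\}=\{\mathfrak p_C\}$ identifies the associated primes; to upgrade this to the stated one-to-one correspondence with strong covers I would check that $C\mapsto\mathfrak p_C$ is injective, i.e.\ that a strong cover is recovered from $\mathfrak p_C=(L_1(C)\cup L_2(C))$ together with the forced membership of its $L_3$-vertices. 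This injectivity---recovering $C$, and in particular $L_3(C)$, from its prime---together with the strongness verification in the previous step, are the two places where the precise definition of strong vertex cover does the real work.
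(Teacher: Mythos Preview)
The paper does not prove this statement at all: Theorem~\ref{irrdec} is quoted verbatim from \cite[Theorem~25]{PitonesReyesToledo} and is used in the paper purely as a black box (in the proofs of Theorem~\ref{forest}, Lemma~\ref{3rdsym}, and Theorem~\ref{main line}). There is therefore no ``paper's own proof'' to compare your proposal against; any comparison would have to be with the original argument of Pitones, Reyes, and Toledo, not with anything in this manuscript.

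That said, two remarks on your sketch are worth making. First, the definition of $I_C$ reproduced in this paper appears to contain a typo: in the source \cite{PitonesReyesToledo} the pure powers run over $L_2(C)\cup L_3(C)$, not $L_1(C)\cup L_2(C)$, so that $\sqrt{I_C}=(C)$ rather than $(L_1(C)\cup L_2(C))$. Your computation of $\mathfrak p_C$ and your whole injectivity discussion at the end are built on the mis-stated version, and with that version the map $C\mapsto\mathfrak p_C$ is genuinely not injective (two strong covers differing only in $L_3$ would collapse). Second, you correctly flag the two real difficulties---arranging the independent set $W$ so that $C=V(\D)\setminus W$ is \emph{strong}, and producing the irredundancy witnesses---but you do not actually carry either of them out; these are precisely the steps where the argument in \cite{PitonesReyesToledo} does nontrivial combinatorial work, so as written your proposal is an outline rather than a proof.
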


Let $I=Q_1\cap \cdots \cap Q_m$ be a primary 
decomposition of the ideal $I$. For $P\in \ass(R/I)$, we denote $Q_{\subseteq P}$ to be the intersection of all $Q_i$ with $\sqrt{Q_i}\subseteq P$. 
\begin{theorem}\cite[Theorem 3.7]{SymbolicMon}\label{altsym}
	The $m$-th symbolic power of a monomial ideal $I$ is $$I^{(m)}=\bigcap_{P\in
	\ass(R/I)}Q^m_{\subseteq P}$$
\end{theorem}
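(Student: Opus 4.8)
The plan is to work directly from the localization description of the symbolic power recorded in the introduction, namely $I^{(m)}=\bigcap_{P\in\ass(R/I)}\left(I^mR_P\cap R\right)$, and to evaluate the contracted ideal $I^mR_P\cap R$ separately for each associated prime $P$. Fix the irredundant monomial primary decomposition $I=Q_1\cap\cdots\cap Q_r$, so that each $Q_i$ is a monomial ideal primary to a monomial prime $P_i=\sqrt{Q_i}$ and $\ass(R/I)=\{P_1,\dots,P_r\}$ (I reserve the letter $m$ for the symbolic exponent). The whole theorem then reduces to proving, for each individual $P\in\ass(R/I)$, the single identity $I^mR_P\cap R=(Q_{\subseteq P})^m$, after which intersecting over all $P$ gives the claim term by term.

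First I would pass to the localization. Since localization commutes with finite intersections, $IR_P=\bigcap_i Q_iR_P$; for every index $i$ with $P_i\not\subseteq P$ the ideal $Q_i$ meets $R\setminus P$ (some variable $x_j\in P_i\setminus P$ has a power $x_j^{a}\in Q_i$, and $x_j^a\notin P$ as $P$ is prime), so $Q_iR_P=R_P$ and such components disappear. Hence $IR_P=\bigcap_{P_i\subseteq P}Q_iR_P=Q_{\subseteq P}R_P$. Because forming powers also commutes with localization, raising to the $m$-th power gives $I^mR_P=(IR_P)^m=(Q_{\subseteq P})^mR_P$, and therefore $I^mR_P\cap R=(Q_{\subseteq P})^mR_P\cap R$.

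The heart of the argument, and the step I expect to be the main obstacle, is to show that contracting back introduces nothing new, i.e.\ $(Q_{\subseteq P})^mR_P\cap R=(Q_{\subseteq P})^m$. In general the contraction of a localization recovers only those primary components whose radical is contained in $P$, so it suffices to prove that every associated prime of the ordinary power $(Q_{\subseteq P})^m$ lies in $P$; this is precisely where I must rule out embedded primes of the power straying outside $P$, and where monomiality is essential. Here I would invoke the structure of monomial primary ideals: a monomial ideal primary to $(x_j : j\in S)$ involves only the variables $\{x_j : j\in S\}$. Since every $Q_i$ occurring in $Q_{\subseteq P}$ satisfies $P_i\subseteq P$, the ideal $Q_{\subseteq P}$, and hence each of its powers, involves only variables appearing in $P$; equivalently $(Q_{\subseteq P})^m$ is extended from the polynomial subring on those variables. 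Consequently every element of $\ass\big(R/(Q_{\subseteq P})^m\big)$ is generated by variables of $P$ and so is contained in $P$, which forces every primary component to survive the contraction and yields $(Q_{\subseteq P})^mR_P\cap R=(Q_{\subseteq P})^m$.

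Combining the two displays gives $I^mR_P\cap R=(Q_{\subseteq P})^m$ for each $P\in\ass(R/I)$, and intersecting over all associated primes produces $I^{(m)}=\bigcap_{P\in\ass(R/I)}(Q_{\subseteq P})^m$, as asserted. The only delicate point is the control of $\ass\big(R/(Q_{\subseteq P})^m\big)$ in the previous paragraph; everything else is a routine manipulation of localized primary decompositions, and I would not expect surprises there.
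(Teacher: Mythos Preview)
The paper does not give its own proof of this statement: Theorem~\ref{altsym} is simply quoted from \cite[Theorem 3.7]{SymbolicMon} and used as a black box, with no argument supplied in the present paper. So there is nothing in the paper to compare your proposal against.

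That said, your argument is a correct proof of the quoted result. The reduction to showing $I^mR_P\cap R=(Q_{\subseteq P})^m$ for each $P$ is exactly right, and the only nontrivial step---that every associated prime of $(Q_{\subseteq P})^m$ is contained in $P$---is handled cleanly by your observation that a monomial ideal primary to $(x_j:j\in S)$ is generated by monomials in the variables $\{x_j:j\in S\}$ alone (if a minimal generator $u$ were divisible by some $x_k$ with $k\notin S$, then $u/x_k\notin Q$ would force a power of $x_k$ into $Q$, contradicting $x_k\notin\sqrt{Q}$). Hence $(Q_{\subseteq P})^m$ lives in the subring on the variables of $P$, all of its associated primes sit inside $P$, and the contraction from $R_P$ recovers it exactly. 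This is essentially the standard proof one finds in the literature on symbolic powers of monomial ideals.
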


\section{Symbolic Powers of Weighted Oriented Graphs}\label{symb power}
In this section, we will study equality of ordinary and symbolic powers of edge ideals of 
certain classes of weighted oriented graphs. 

The following lemma exhibits a situation when all the symbolic and ordinary powers of the edge ideals coincide.
	\begin{lemma}
		Let $\D$ be a weighted oriented graph such that $w(v)>1$ for all
		$v\in V(\D)$. Then $V(\D)$ is a strong vertex cover of $\D$ if and only if 
		$\D$ does not contain any source vertex. Moreover, 
		if $V(\D)$ is a strong vertex cover, then $I(\D)^{(s)}=I(\D)^s$ for all 
		$s \geq 1$.
	\end{lemma}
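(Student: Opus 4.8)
The plan is to handle the biconditional and the ``Moreover'' part separately; the first is pure definition-chasing, and the second reduces, via Theorem \ref{altsym}, to the single observation that $V(\D)$ being a strong vertex cover forces the maximal ideal into $\ass(R/I(\D))$.

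For the biconditional, note first that $V(\D)$ is automatically a vertex cover of the underlying graph $\G$, so the content is strongness. Putting $C = V(\D)$ we have $C^c = \emptyset$, hence $N^+_\D(x)\cap C^c = N^-_\D(x)\cap C^c = \emptyset$ for every vertex $x$; thus $L_1(C) = L_2(C) = \emptyset$ and $L_3(C) = V(\D)$. By the definition of a strong vertex cover, $C$ is strong precisely when for each $x\in V(\D)$ there is $(y,x)\in E(\D)$ with $y\in L_2(C)\cup L_3(C) = V(\D)$ and $w(y)\geq 2$. Since $y\in V(\D)$ is automatic and $w(y)>1$ holds for every vertex by hypothesis, this says exactly that every vertex of $\D$ has an in-neighbour, i.e. (with no isolated vertices, as is standard for edge ideals) that $\D$ has no source vertex. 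This proves the equivalence.

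For the ``Moreover'' part, assume $V(\D)$ is a strong vertex cover. Then in the irredundant irreducible decomposition $I(\D) = \bigcap_{C\in\mathcal C_S(\D)} I_C$ of Theorem \ref{irrdec} the cover $V(\D)$ contributes the component $I_{V(\D)}$, and from $L_1(V(\D))\cup L_2(V(\D)) = \emptyset$ and $L_3(V(\D)) = V(\D)$ one reads off $I_{V(\D)} = (x_1^{w_1},\dots,x_n^{w_n})$, which is $\mathfrak m$-primary for $\mathfrak m = (x_1,\dots,x_n)$; equivalently, under the bijection of Theorem \ref{irrdec} the associated prime attached to $V(\D)$ is $\mathfrak m$. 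Hence $\mathfrak m\in\ass(R/I(\D))$. Now fix a primary decomposition $I(\D) = Q_1\cap\cdots\cap Q_m$. Since $\sqrt{Q_i}\subseteq\mathfrak m$ for every $i$, the ideal $Q_{\subseteq\mathfrak m}$ of Theorem \ref{altsym} equals $Q_1\cap\cdots\cap Q_m = I(\D)$, so $Q_{\subseteq\mathfrak m}^{s} = I(\D)^s$. Therefore, for every $s\geq 1$,
$$I(\D)^{(s)} = \bigcap_{P\in\ass(R/I(\D))} Q_{\subseteq P}^{s} \ \subseteq\ Q_{\subseteq\mathfrak m}^{s} = I(\D)^s ,$$
and since each $Q_{\subseteq P}$ contains $I(\D)$ the reverse inclusion $I(\D)^s\subseteq I(\D)^{(s)}$ is automatic, giving $I(\D)^{(s)} = I(\D)^s$ for all $s\geq 1$.

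The computations are light, so there is no serious obstacle; the one point that deserves care is the identification, via Theorem \ref{irrdec}, of the component (equivalently, the associated prime) coming from the cover $V(\D)$ with the maximal ideal, since it is exactly this that lets Theorem \ref{altsym} collapse the defining intersection of $I(\D)^{(s)}$ down to $I(\D)^s$. I would also remark that the argument is not special to edge ideals: for any monomial ideal $I$, $\mathfrak m\in\ass(R/I)$ already implies $I^{(s)} = I^s$ for all $s\geq 1$.
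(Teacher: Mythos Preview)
Your proof is correct and follows essentially the same line as the paper's: both arguments unwind the definition of strong vertex cover for $C=V(\D)$ to get the biconditional, and both deduce the ``Moreover'' part from the fact that $\mathfrak m\in\ass(R/I(\D))$. The only difference is that the paper cites \cite[Lemma 3.3]{SymbolicMon} for the implication $\mathfrak m\in\ass(R/I(\D))\Rightarrow I(\D)^{(s)}=I(\D)^s$, whereas you prove it directly via Theorem~\ref{altsym}; this makes your version slightly more self-contained but is not a genuinely different route.
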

	\begin{proof}
	From the definition of strong vertex cover, $V(\D)$ is a strong vertex cover of 
	$\D$ if and only if for all $x\in V(\D)$ there is $y\in V(\D)$ such that 
	$\{y,x\}\in E(\D)$. In other words, $V(\D)$ is a strong vertex cover if and only
	if $\D $ does not contain any source vetices. Now, if $V(\D)$ is a strong vertex cover,
	then the unique homogeneous maximal ideal is an associated prime of $I(\D)$, and 
	hence by \cite[Lemma 3.3]{SymbolicMon}, $I(\D)^{(s)}=I(\D)^s$ for all $s \geq 1$.
	\end{proof}
As an immediate consequence, we have the following: 
\begin{corollary}\label{cycle}
	Let $\D$ be a weighted naturally oriented cycle such that
	$w(v)> 1$ for all $v\in V(\D)$. Then $I(\D)^{(s)}=I(\D)^s$ for all $s\geq 1$.
\end{corollary}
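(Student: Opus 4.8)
The plan is to obtain this as a direct consequence of the preceding Lemma, the only substantive point being the combinatorial fact that a naturally oriented cycle has no source vertex. First I would fix notation: write $V(\D)=\{x_1,\ldots,x_n\}$ and $E(\D)=\{(x_i,x_{i+1})\mid 1\le i\le n-1\}\cup\{(x_n,x_1)\}$, i.e. the cyclic orientation. For each index $i$, reading subscripts modulo $n$, the edge $(x_{i-1},x_i)$ lies in $E(\D)$, so $x_{i-1}\in N_{\D}^-(x_i)\neq\emptyset$. Hence every vertex of $\D$ has a nonempty in-neighbourhood, and therefore $\D$ contains no source vertex.

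Next, since by hypothesis $w(v)>1$ for all $v\in V(\D)$, the Lemma applies and its first assertion gives that $V(\D)$ is a strong vertex cover of $\D$ precisely because $\D$ has no source vertex. Thus $V(\D)$ is a strong vertex cover. Applying the \textquotedblleft moreover\textquotedblright{} part of the Lemma, which asserts that $I(\D)^{(s)}=I(\D)^s$ for all $s\ge 1$ whenever $V(\D)$ is a strong vertex cover, we conclude $I(\D)^{(s)}=I(\D)^s$ for every $s\ge 1$.

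I expect no real obstacle here; the argument is essentially a one-line verification. If one wanted to make the mechanism transparent rather than black-boxing the Lemma, one could note that for $C=V(\D)$ we have $L_1(C)=L_2(C)=\emptyset$ and $L_3(C)=V(\D)$, so the strong-cover condition for $x\in L_3(C)$ is exactly the existence of $(y,x)\in E(\D)$ with $w(y)\ge 2$, which holds by the cyclic orientation together with $w(y)>1$; consequently $I_C=\mathfrak{m}=(x_1,\ldots,x_n)$ is an associated prime of $I(\D)$ by Theorem \ref{irrdec}, and then \cite[Lemma 3.3]{SymbolicMon} forces the equality of all symbolic and ordinary powers. But this is already contained in the proof of the Lemma, so in the write-up I would simply cite the Lemma.
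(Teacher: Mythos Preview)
Your proposal is correct and follows exactly the paper's approach: the corollary is stated there as an immediate consequence of the preceding Lemma, and your argument simply makes explicit the one combinatorial check (a naturally oriented cycle has no source vertex) needed to invoke it. One small slip in your optional elaboration: for $C=V(\D)$ the irreducible ideal is $I_C=(x_1^{w_1},\ldots,x_n^{w_n})$, not $\mathfrak{m}$ itself; it is $\sqrt{I_C}=\mathfrak{m}$ that is the associated prime, but this does not affect the argument.
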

	\begin{theorem}\label{forest}
		Let $\mathcal D$ be a weighted oriented graph with 
		$w(v)>1$ for all $v\in V(\D)$. If $I(\D)=(xy^{w(y)}, yz^{w(z)}) + I(\D')$,
		where $\D'$ is an oriented rooted tree with root at the vertex $z$, then 
		$I(\D)^{(s)}=I(\D)^s$ for all $s\geq 1$.
	\end{theorem}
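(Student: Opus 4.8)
The plan is to show that every associated prime of $I(\D)$ of height $\geq 2$ produces no obstruction to the equality $I(\D)^{(s)} = I(\D)^s$, by reducing to the structure of $\D'$ as a rooted tree. By Theorem \ref{altsym}, it suffices to show $\bigcap_{P \in \ass(R/I(\D))} Q^s_{\subseteq P} \subseteq I(\D)^s$ for all $s$; since the reverse containment and the primary containments associated to the minimal primes are automatic, the content is in controlling the embedded primes, which by Theorem \ref{irrdec} correspond to the strong (but not minimal) vertex covers of $\D$. First I would analyze these strong vertex covers $C$: since $w(v) > 1$ for all $v$, the irreducible ideal $I_C = (L_1(C) \cup \{x_j^{w_j} : x_j \in L_1(C) \cup L_2(C)\})$ can have its $L_2$ and $L_3$ parts contribute pure-power generators, and the rooted-tree hypothesis on $\D'$ (with the root $z$ attached to the ``handle'' $x \to y \to z$) sharply restricts which vertices can lie in $L_3(C)$, because each such vertex needs an in-neighbour of weight $\geq 2$ inside $L_2(C) \cup L_3(C)$, and in a rooted tree orientation there is a unique such in-neighbour.

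The key step is an inductive argument on the rooted tree $\D'$. I would root the whole configuration at $z$ and induct on the number of vertices, peeling off a leaf $\ell$ of the tree farthest from the root (so $\ell$ is a sink with a unique in-neighbour $p$). Writing $I(\D) = (x y^{w(y)}, y z^{w(z)}) + I(\D')$, I would separate the generators involving $\ell$, namely $p \ell^{w(\ell)}$, from the rest, and set $J$ to be the edge ideal of the weighted oriented graph obtained by deleting $\ell$. The goal is to relate $I(\D)^{(s)}$ to $J^{(s)}$ and a colon/sum operation, using the known behaviour of symbolic powers under the ``adding a leaf'' operation — concretely, one expects a decomposition of the form $I(\D)^{(s)} = \sum_{i=0}^{s} (p\ell^{w(\ell)})^i \big(J^{(s-i)} : (\text{appropriate monomial})\big)$ or a splitting $I(\D)^{(s)} = \bigcap$ of the contribution of covers containing $\ell$ and those not containing $\ell$. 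The point is that the new associated primes created by adding the leaf $\ell$ all have the homogeneous-maximal-ideal-type behaviour locally, or else restrict to associated primes already handled by induction.

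The main obstacle I anticipate is bookkeeping the embedded primes precisely: one must verify that for a strong vertex cover $C$ of $\D$ with $\sqrt{I_C} = P$, the localized power $Q^s_{\subseteq P} = (I_C \cap \bigcap\{I_{C'} : \sqrt{I_{C'}} \subsetneq P\})^s$ — after collecting all covers lying below $P$ — already contains the relevant piece of $I(\D)^s$. This is where the weight hypothesis $w(v) > 1$ is essential: it forces $x^{w_j}_j$-type generators into $I_C$ even for $x_j \in L_2(C)$, making the irreducible ideals ``fat'' enough that the intersection collapses. A clean way to organize this is to prove the stronger statement that for each strong vertex cover $C$, one has $I_{C}^{(s)}$-type localization equal to $I_C^s$ locally (using \cite[Lemma 3.3]{SymbolicMon}-style arguments at the maximal ideal of $R_P$), and then patch via Theorem \ref{altsym}. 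I would also need the base case: when $\D'$ consists only of the root $z$ (possibly with no edges), $I(\D) = (xy^{w(y)}, yz^{w(z)})$ is the edge ideal of a naturally oriented path on three vertices with all weights $> 1$, for which the equality $I(\D)^{(s)} = I(\D)^s$ should follow from a direct computation or from the already-cited results of Mandal–Pradhan on weighted naturally oriented paths.
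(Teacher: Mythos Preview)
Your proposal has a genuine gap: the inductive leaf-peeling step rests on a decomposition formula you only state as something ``one expects,'' namely a relation of the shape $I(\D)^{(s)} = \sum_i (p\ell^{w(\ell)})^i\big(J^{(s-i)}:\text{(unspecified)}\big)$, and you never establish it. Proving such a formula for symbolic powers when adding a leaf to a weighted oriented tree is not routine --- the colon operation interacts with the embedded primes in ways that depend on the entire cover structure, not just on the local data at $\ell$ --- and you yourself flag the bookkeeping of embedded primes as the main obstacle without resolving it. As written, the induction has no engine.

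The paper's proof avoids induction on the tree entirely and instead exploits the handle $x\to y\to z$ directly. The key structural observation you are missing is that the strong vertex covers of $\D$ split into exactly two classes: those containing $x$ (hence not $y$, hence $z$), call them $\mathcal C_1$, and those containing $y$ but not $x$, call them $\mathcal C_2$. Each class has a unique maximal element, so by Theorem~\ref{altsym} one has $I(\D)^{(s)}=I_1^s\cap I_2^s$ where $I_i=\bigcap_{C\in\mathcal C_i}I_C$. One then computes these intersections in closed form: $I_1=(x,z^{w(z)})+I(\D')$ and $I_2=(y^{w(y)},yz^{w(z)})+I(\D')$. Since $I(\D')$ is a common summand in disjoint variables, the containment $I_1^s\cap I_2^s\subseteq I(\D)^s$ reduces to the elementary monomial inclusion $(x,z^{w(z)})^s\cap(y^{w(y)},yz^{w(z)})^s\subseteq(xy^{w(y)},yz^{w(z)})^s$, which is checked by hand using $w(y)\geq 2$. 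The rooted-tree hypothesis on $\D'$ is used only to pin down the maximal strong vertex covers and to verify the closed forms for $I_1,I_2$; no induction on $\D'$ is needed.
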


	\begin{proof}
		We first observe that if the 
		strong vertex cover of 
		$\mathcal D$ contains the vertex $x$, then it can not contain the vertex $y$, and 
		hence it must contain $z$. 
		In view of this, we partition the strong vertex covers of $\D$ 
		into two maximal sub-classes, namely $\mathcal{C}_1$ and $\mathcal{C}_2$,
		where $\mathcal{C}_1$ contains all the strong vertex cover of 
		$\mathcal{D}$ which contains the vertices $\{x, z\}$ and does not contain the vertex $y$ 
		and $\mathcal{C}_2$ contains all the strong vertex cover of $\mathcal{D}$ 
		which contains the vertex $y$ and does not contain the vertex $x$. 
		Set $\mathcal{C}=\mathcal{C}_1\cup \mathcal{C}_2$. 
		By Theorem \ref{irrdec}, the irredundant primary decomposition of 
		$I$ is given by $$I=I(\D)=\bigcap_{C\in \mathcal{C}}I_C,$$ 
		where $I_C$'s are the irreducible ideal associated to the strong vertex cover 
		$C$. Note that $\{x,z\}\cup V(\D')$ is the unique maximal strong vertex cover 
		in $\C_1$ and $\{y,z\}\cup V(\D')$ is the unique maximal strong vertex cover 
		in $\C_2$. We can write 
		$$I=\bigcap_{C\in \mathcal{C}}I_C=\left(\bigcap\limits_{C\in \mathcal{C}_1}I_C\right) \bigcap 
		\left(\bigcap\limits_{C'\in \mathcal{C}_2}I_{C'}\right)=I_1\cap I_2,$$ where 
		$I_1=\bigcap\limits_{C\in \mathcal{C}_1}I_C$ and 
		$I_2=\bigcap\limits_{C'\in \mathcal{C}_2}I_{C'}$. 
		Then by Theorem \ref{altsym},  for any $s\geq 1$, $I^{(s)}=I_1^s\cap I_2^s$.	 
		\vskip 1mm \noindent
		\textbf{Claim:}
		$I_1=(x,z^{w(z)})+I(\D')$ and 
		$I_2=(y^{w(y)},yz^{w(z)})+I(\D')$.
		\vskip 1mm 
		\noindent
		\textit{Proof of the claim:} For any $C\in \mathcal{C}_1$, we have 
		$x\in C$ but $y\notin C$, therefore $x\in L_1(C)$ and $z\in L_2(C)$ and 
		hence $x\in I_C$ and $z^{w(z)}\in I_C$. 
		Also $I(\D)\subseteq I_C$ for any strong vertex cover $C$ of $\D$ and $\D'$ being a subgraph of $\D$ we have $I(\D')\subseteq I_1$. 
		Conversely, let $m=v_1^{r_1}v_2^{r_2}\cdots v_n^{r_n}$ be a minimal generator of $I_1$ 
		such that $m\notin I_1$.
		Then we must have $v_i\neq x$ for all $i$. Now, if for all $i>1$,
		$r_i<w(v)$ for any vertex $v\in V(\D)$, then if we consider the strong vertex cover $C=\{x, z\}\cup V(\D')\in \C_1$, 
		we have $m\notin I_C$, which is a contradiction. Otherwise if for some $1\leq i_1<i_2<\cdots < i_k\leq n$, 
		$r_{i_j}\geq w(v_{i_j})$ and $r_i<w(v_i)$ for all $i\neq i_j$, with $v_{i_j}\neq z$ for all $j$, then, by our assumption, for all $v_l\in N_{\D}^-(v_{i_j})$, $v_l\centernot\mid m$. Now let $S=\{v_{i_1},\ldots , v_{i_k}\}$ and  $T=N_{\D}^-(S)$. Observe that $C^*=C-(N_{\D}^+(T)\cup \{x,z\})$ is a strong vertex cover of $\D$ and $C^*\in \C_1$. Note that, $v_{i_j}\notin C^*$ for all $1\leq j\leq k$; $L_1(C^*)=T$ and hence for any $v_l, l\neq i_j, 1\leq j\leq k$ such that $v_l|m$, we have $v_l\notin L_1(C^*)$. Then we must have $m\notin I_{C^*}$, which is again a contradiction.
		Now proceeding as in the above, one can show that 
		$I_2=(y^{w(y)},yz^{w(z)})+I(\D')$.

		To show $I^{(s)}=I_1^s\cap I_2^s=I^s$ for all $s \geq 1$, it is enough to show 
		$$(x,z^{w(z)})^s\cap (y^{w(y)},yz^{w(z)})^s\subseteq 
		(xy^{w(y)},yz^{w(z)})^s \text{ for all $s \geq 1$.}$$  
		Let $m\in (x,z^{w(z)})^s\cap (y^{w(y)},yz^{w(z)})^s$. Then
		$x^ay^{bw(z)}|m$ and $y^{cw(y)}(yz^{w(z)})^d|m$ for some 
		$a,b,c,d\geq 0$ and $a+b=s,~ c+d=s$. Therefore 
		$x^ay^{cw(y)+d}z^{\max\{b,d\}w(z)}|m$. 
		Let $m'=x^ay^{cw(y)+d}z^{\max\{b,d\}w(z)}$. If $d\geq b$, then 
		we have $a\geq c$, so we can write $m'= x^{c-a}(xy^{w(y)})^c\cdot (yz^{w(z)})^d$ 
		and we are done. If $d<b$, then $a<c$. We can  
		write $m'=(xy^{w(y)})^ay^{(c-a)w(y)+d}\cdot z^{bw(z)}=
		(xy^{w(y)})^ay^{(b-d)w(y)+d}z^{bw(z)}$. Since $w(y)\geq 2$, we have $(b-d)w(y)+d\geq 2(b-d)+d=2b-d> b$ as
		$b>d$. Then we have $m'\in (xy^{w(y)},yz^{w(z)})^s$, and this 
		completes the proof.				
	\end{proof}
	
	\begin{remark}
		If $v$ is a sink vertex for some $v\in V(\D)$, if we assume $w(v)=1$ then the strong vertex covers of $\mathcal{D}$ remains the same as the case when $w(v)\geq 2$. So, we can just replace $w(v)$ by 1 everywhere in the above Theorem to get $I(\D)^{(s)}=I(\D)^s$ for all s.   
	\end{remark}
The next lemma is crucial in proving our main result.

\begin{lemma}\label{3rdsym}
	Let $\mathcal D$ be a weighted oriented uni-directed line with 
	$V(\D)=\{x_1, x_2, x_3, \ldots , x_n\}$ and $E(\D)=\{(x_i,x_{i+1})|1\leq i\leq n-1\}$.
	If for some $1< i < n-1$, $w(x_i)\geq 2, w(x_{i+1})=1$, then 
	$I(\D)^{(3)}\neq I(\D)^3$.	
\end{lemma}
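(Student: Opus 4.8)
The strategy is to exhibit an explicit monomial $m$ that lies in $I(\D)^{(3)}$ but not in $I(\D)^3$. First I would set up notation: write $a = x_i$, $b = x_{i+1}$, so the edge $(a,b)$ contributes the generator $ab$ (since $w(b)=1$), while the edge $(x_{i-1},a)$ contributes $x_{i-1}a^{w(a)}$ with $w(a)\ge 2$, and the edge $(b,x_{i+2})$ contributes $bx_{i+2}^{w(x_{i+2})}$. The candidate witness should be a monomial supported near the three consecutive vertices $x_{i-1},a,b$; a natural guess, by analogy with the classical fact that symbolic and ordinary powers of path edge ideals differ, is something like $m = x_{i-1}\, a^{w(a)}\, b^{w(a)}\, x_{i-1}$-adjusted, i.e.\ a monomial of the form $x_{i-1}^{p} a^{q} b^{r}$ (possibly also involving $x_{i+2}$) whose multidegree is chosen so that it sits in every localized power $I^3 R_{\mathfrak p}$ but cannot be written as a product of three generators of $I(\D)$.

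The key steps, in order, are: (1) Using Theorem \ref{irrdec} (equivalently Theorem \ref{altsym}), reduce membership in $I(\D)^{(3)}$ to checking, for each strong vertex cover $C$ of $\D$, that $m$ lies in the appropriate power of the primary component determined by $C$ — or better, use the formula $I^{(3)} = \bigcap_{P} Q^3_{\subseteq P}$ and verify $m \in Q^3_{\subseteq P}$ for each associated prime $P$. Because $\D$ is a uni-directed line, its strong vertex covers are combinatorially simple, and the relevant irreducible ideals $I_C$ are generated by variables together with the single power $a^{w(a)}$ coming from the vertex $a$; I would organize the covers into those containing $a$ and those not containing $a$ (hence containing both $x_{i-1}$ and $b$), and check the containment $m \in I_C^3$ (or $Q^3_{\subseteq P}$) in each case by a direct degree count. (2) Show $m \notin I(\D)^3$: suppose $m = g_1 g_2 g_3 \cdot (\text{monomial})$ with each $g_j$ an edge generator; argue by a counting/parity argument on the exponents of $x_{i-1}$, $a$, and $b$ in $m$ that this is impossible — the point being that to "cover" the $a$-exponent one is forced to use the generator $x_{i-1}a^{w(a)}$, which over-consumes $x_{i-1}$, while using $ab$ instead under-consumes the $a$-exponent, and no combination of three such choices matches the target multidegree. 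This is where the hypothesis $w(x_{i+1}) = 1$ (so that $ab$, not $ab^{2}$, is the generator) and $w(x_i) \ge 2$ are both essential, and I would make the exponents of $m$ explicit enough that the three relevant linear inequalities on $(a_1,a_2,a_3)$ — the number of times each type of generator is used — have no nonnegative integer solution.

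The main obstacle I anticipate is choosing the witness monomial $m$ correctly on the first try and then cleanly handling the $I(\D)^{(3)}$ side: one must make sure $m$ is divisible by the right power of $a^{w(a)}$ so that it survives in every $I_C$ where $a^{w(a)}$ appears as a generator (these are the components coming from covers omitting $x_{i-1}$ or $b$), while keeping the $x_{i-1}$- and $b$-exponents small enough that $m \notin I(\D)^3$. Getting these two requirements simultaneously is the crux; a workable choice is likely $m = x_{i-1}\,a^{2w(a)+1}\,b^{2}$ or a close variant (possibly $m = x_{i-1}\, a^{2w(a)}\, b^{3}$ after adjusting for the edge on the other side of $b$), and once $m$ is fixed both the symbolic-membership check (a finite case analysis over the simply-structured strong vertex covers of a line) and the non-membership in $I(\D)^3$ (a small integer-programming infeasibility argument) become routine. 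The remaining care is just to confirm that no strong vertex cover introduces an unexpected extra generator — but on a uni-directed line the covers and their $L_1,L_2,L_3$ decompositions are transparent, so this should go through without surprises.
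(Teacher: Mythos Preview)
Your overall strategy --- exhibit a witness monomial, verify membership in $I(\D)^{(3)}$ by running through the strong vertex covers, and verify non-membership in $I(\D)^3$ by a divisor count --- is exactly the paper's approach. The gap is in the choice of witness: both of your candidates already lie in $I(\D)^3$, so neither can serve. Indeed, with $e_1=x_{i-1}x_i^{w_i}$ and $e_2=x_ix_{i+1}$ one has
\[
e_1e_2^{\,2}=x_{i-1}x_i^{\,w_i+2}x_{i+1}^{\,2},
\]
which divides $x_{i-1}x_i^{\,2w_i+1}x_{i+1}^{\,2}$ (since $w_i+2\le 2w_i+1$) and also divides $x_{i-1}x_i^{\,2w_i}x_{i+1}^{\,3}$ (since $w_i\ge 2$ gives $w_i+2\le 2w_i$). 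So the integer-programming infeasibility you plan to invoke in step~(2) is in fact feasible for both monomials.

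The underlying intuition is inverted: raising the $x_i$-exponent makes it \emph{easier}, not harder, to land in $I(\D)^3$, because $e_1e_2^{2}$ then divides the monomial. The paper's witness keeps the $x_i$-exponent as low as possible, namely $w_i$, and compensates on the symbolic side by bringing in the next vertex:
\[
f=x_{i-1}\,x_i^{\,w_i}\,x_{i+1}^{\,2}\,x_{i+2}^{\,w_{i+2}}.
\]
The factor $x_{i+2}^{w_{i+2}}$ is essential: for maximal strong covers $C$ with $x_{i+1}\notin C$ one has $x_i\in L_1(C')$ and $x_{i+2}\in L_1(C')\cup L_2(C')$ for every strong $C'\subseteq C$, so $f=(x_i)^2\cdot x_{i+2}^{\,w_{i+2}}\cdot(\text{rest})\in (I_{\subseteq C})^3$ (using $w_i\ge 2$); the remaining covers are handled similarly. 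On the other hand $f\notin I(\D)^3$ because any product of three edge generators supported on $\{x_{i-1},x_i,x_{i+1},x_{i+2}\}$ must use $e_1$ at most once (forcing $x_i$-degree $\le w_i$ only if $e_2$ is not used) and $e_3=x_{i+1}x_{i+2}^{w_{i+2}}$ at most once, leaving no way to reach total degree three without overshooting the $x_{i+1}$-exponent $2$. Your parenthetical ``after adjusting for the edge on the other side of $b$'' is pointing in the right direction; the adjustment is not optional, and the $x_i$-exponent must simultaneously drop to $w_i$.
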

\begin{proof}
	First we claim that $f=x_{i-1}x_i^{w_i}x_{i+1}^2x_{i+2}^{w_{i+2}}\in I(\D)^{(3)}$. 
	Note that for any maximal strong vertex cover $C$ of $\D$, $x_{i+2}\notin L_3(C)$. 
	Suppose $C$ contains the vertex $x_{i+2}$. Then either $x_{i+1}\notin C$ or 
	$x_{i+3}\notin C$. If $x_{i+1}\notin C$, then for any strong vertex cover 
	$C'\subseteq C$ 
	we have $x_i\in L_1(C')$ and $x_{i+2}\in L_1(C')\cup L_2(C')$. Then we can write 
	$f=(x_i)^2\cdot x_{i+2}^{w_{i+2}}\cdot x_{i-1}x_i^{w_i-2}x_{i+1}^2\in (I_{\subseteq C})^3$.
	If  $x_{i+3}\notin C$, then for any strong vertex cover $C'\subseteq C$ 
	we have $x_{i+2}\in L_1(C')$. Then we can write 
	$f=(x_ix_{i+1})^2\cdot x_{i+2}\cdot x_{i-1}x_{i+2}^{w_{i+2}-1}\in (I_{\subseteq C})^3$.
	Next, if $C$ does not contain the vertex $x_{i+2}$, then for any strong vertex cover 
	$C'\subseteq C$ we have $x_{i+1}\in L_1(C')$. Then we can write 
	$f=(x_{i-1}x_i^{w_i})\cdot (x_{i+1})^2\cdot x_{i+2}^{w_{i+2}} \in (I_{\subseteq C})^3 $. 
	Therefore, for any maximal strong vertex cover $C$, 
	we have $f\in (I_{\subseteq C})^3$ and hence $f\in I(\D)^{(3)}$. 
	Clearly, $f\notin I(\D)^3$. Therefore $I(\D)^{(3)}\neq I(\D)^3$. 
\end{proof}
Finally we present our main theorem of this section.
	\begin{theorem}\label{main line}
	Let $\mathcal D$ be a weighted oriented uni-directed line with $V(\D)=\{x_1, x_2, x_3, \ldots , x_n\}$ and $E(\D)=\{(x_i,x_{i+1})|1\leq i\leq n-1\}$. Then $I(\D)^{(s)}=I(\D)^s$ for all $s\geq 1$ if and only if the following condition is satisfied : if $w_j\geq 2$ for some $1< j< n$ then $w_i\geq 2$ for all $j\leq i\leq n-1$.
	\end{theorem}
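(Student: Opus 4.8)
The assertion has two implications. The `only if' direction falls out of Lemma \ref{3rdsym}: if the displayed condition fails there are indices $j<i$ with $1<j<n$, $w_j\ge 2$, $j\le i\le n-1$ and $w_i=1$; choosing $k$ to be the largest index of $\{j,\dots,i-1\}$ with $w_k\ge 2$ forces $w_{k+1}=1$, and $k\ge j>1$, $k+1\le i\le n-1$ give $1<k<n-1$, so Lemma \ref{3rdsym} (applied with $k$ in place of $i$) yields $I(\D)^{(3)}\neq I(\D)^3$ and in particular not all powers coincide.

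For the `if' direction I would argue by induction on $n$, assuming throughout that the condition holds; since $w_1$ does not occur in $I(\D)$ I may also take $w_1\ge 2$. The cases $n\le 2$ are immediate because $I(\D)$ is then zero or principal. For $n\ge 3$ I split on $w_2$. If $w_2\ge 2$ then by hypothesis $w_2,\dots,w_{n-1}\ge 2$, and writing $I(\D)=(x_1x_2^{w_2},x_2x_3^{w_3})+I(\D')$ with $\D'$ the sub-line $x_3\to\cdots\to x_n$ — a rooted tree with root $x_3$ — Theorem \ref{forest}, together with the remark following it to accommodate a sink of weight $w_n=1$, gives $I(\D)^{(s)}=I(\D)^s$ at once. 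So the substantial case is $w_2=1$, where $I(\D)=(x_1x_2)+I(\D_1)$ with $\D_1=\D\setminus\{x_1\}$; the line $\D_1$ again satisfies the hypothesis, so by induction $I(\D_1)^{(s)}=I(\D_1)^s$.

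To lift this to $\D$ I would imitate the proof of Theorem \ref{forest}. A strong vertex cover $C$ of $\D$ with $x_1\in C$ cannot contain $x_2$ (else $x_1\in L_3(C)$ with no admissible predecessor), hence contains $x_3$; a strong cover with $x_1\notin C$ must contain $x_2$. Splitting the strong covers into these two classes and grouping their irreducible components via Theorem \ref{irrdec} gives $I(\D)=I_1\cap I_2$ with $I_1=\bigcap_{x_1\notin C}I_C=(x_2)+I(\D\setminus\{x_1,x_2\})$ — here $w_2=1$ is exactly what puts $x_2$ itself into every $I_C$ with $x_2\in C$ — and $I_2=\bigcap_{x_1\in C}I_C=(x_1)+I_2''$ for a monomial ideal $I_2''$ in the variables $x_3,\dots,x_n$ whose precise shape depends on the length of the initial run of weight-one vertices but which is again `line-like' of strictly smaller size. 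Since the associated primes containing $x_1$ are exactly those arising from the second class (and those containing $x_2$ exactly from the first), Theorem \ref{altsym} shows $I(\D)^{(s)}=I_1^{(s)}\cap I_2^{(s)}$; and because $I_1$, $I_2''$ live in variable sets disjoint from $\{x_2\}$, $\{x_1\}$ respectively and are governed by shorter lines, the induction hypothesis gives $I_1^{(s)}=I_1^s$ and $I_2^{(s)}=I_2^s$. The problem thus reduces to the monomial containment $I_1^s\cap I_2^s\subseteq I(\D)^s$, the reverse inclusion being trivial.

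That last containment is where I expect the real work. Given a monomial $m\in I_1^s\cap I_2^s$, membership in the two factors records divisibilities of $m$ by products of powers of $x_1,x_2,x_3,\dots$ with the exponents in each factorisation summing to $s$; one must re-assemble $m$ as a product of $s$ generators $x_ix_{i+1}^{w_{i+1}}$ of $I(\D)$ by shifting exponents down the line, and the inequalities $w_i\ge 2$ on the `heavy' portion of the line — precisely the content of the hypothesis, and the analogue of the estimate $(b-d)w(y)+d>b$ at the end of the proof of Theorem \ref{forest} — are what make this rearrangement possible. Getting the description of $I_2''$ right when the light initial run is long, and then pushing the exponent bookkeeping through uniformly in $s$, are the two steps I anticipate being most delicate. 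Alternatively one could avoid the induction entirely: classify the strong vertex covers of a uni-directed line directly — a vertex cover $C$ with $\{x_1,x_2\}\not\subseteq C$ having no three consecutive members $x_{i-1},x_i,x_{i+1}$ with $w_{i-1}=1$ — and compute the primary-decomposition form of $I(\D)^{(s)}$ in one pass, but the monomial containment that must then be checked is of the same nature.
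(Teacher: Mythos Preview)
Your `only if' direction matches the paper. For the `if' direction, however, the paper does \emph{not} induct on $n$: instead it jumps straight to the transition index $k$ (the first $k$ with $w_k\ge 2$) and classifies the \emph{maximal} strong covers of $\D$ into three families $\{C_\alpha\}$, $\{C_\beta\}$, $\{C_\gamma\}$ according to which of $x_{k-1},x_k,x_{k+1}$ lie in $C$. On the light segment $x_1,\dots,x_{k-2}$ these covers are just minimal vertex covers of an unweighted path, so the corresponding intersections $\bigcap_\alpha I_{\subseteq C_\alpha}^s$ etc.\ factor as $\sum_{s_1+s_2=s} I(\text{path})^{(s_1)}\cdot J_\ell^{s_2}$, and the known equality $I(\text{path})^{(s_1)}=I(\text{path})^{s_1}$ for simple bipartite graphs eliminates the light part in one stroke. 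What remains is a single explicit three-fold containment
\[
(x_{k-1},x_{k+1}^{w_{k+1}})^s\cap(x_{k-2},x_k^{w_k})^s\cap(x_{k-3},x_{k-1},x_k,x_{k+2}^{w_{k+2}})^s\subseteq I(\D)^s,
\]
which is then proved by induction on $s$ via edge-division.

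Your induction on $n$ is a legitimate alternative strategy, but as written it has a real gap beyond the bookkeeping you flag. Peeling off $x_1$ when $w_2=1$ gives $I_2=(x_1)+I_2''$ with $I_2''$ not an edge ideal of a line: in small examples one finds $I_2''=(x_3)+I(\D\setminus\{x_1,x_2,x_3\})$, so your induction hypothesis, stated only for edge ideals $I(\D')$, does not apply directly to $I_2$ --- you would need to strengthen the statement to ideals of the form ``variable plus shorter line'' and rerun the argument. More seriously, the containment $I_1^s\cap I_2^s\subseteq I(\D)^s$ you defer is, at each step of your induction, a problem of essentially the same size as the one you started with: you are moving through the light run one vertex at a time rather than collapsing it. The paper's device of cutting at $k$ and invoking the bipartite case for the unweighted segment is exactly what makes the residual containment a fixed three-term computation independent of how long the light run is. Your closing alternative --- classify strong covers directly and compute in one pass --- is therefore much closer to what the paper actually does, and is the line worth pursuing.
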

\begin{proof}
	If for some $1< j < n-1$, $w(x_j)\geq 2, w(x_{j+1})=1$, then by Lemma \ref{3rdsym}, $I(\D)^{(3)}\neq I(\D)^3$.
	Conversely, let $1< k< n$ be such that $w(x_i)=1$ for all $i<k$ and $w(x_i)\geq 2$ 
	for all $k\leq i\leq n-1$. Now we will determine the maximal strong vertex 
	covers of $\D$.
	
	Let $\{C_{\alpha}\}$ be the collection of all maximal strong 
	vertex covers which does not contain the vertex $x_k$. 
	Then we have $x_{k-1},x_{k+1}\in C_\alpha$.
	Therefore,
	\begin{align*}
	 C_{\alpha}&=\Big\{C'_\alpha \mid C'_\alpha 
	\text{ is a minimal vertex cover of the induced line } \D(x_1,\ldots ,x_{k-2})\Big\}\\
	&\bigcup \{x_{k-1},x_{k+1},x_{k+2},x_{k+3}, \ldots ,x_n\}.
	\end{align*}
        
        Let $C$ be a strong vertex cover which contains the vertex $x_k$. 
	Note that, $x_k\notin L_3(C)$.
	Let $\{C_{\beta}\}$ be the collection of all maximal strong vertex covers which  
	contain the vertices $x_k$, $x_{k+1}$ and does not contain the vertex $x_{k-1}$. 
	Therefore,
	\begin{align*}
	 C_{\beta}&=\Big\{C'_\beta \mid C'_\beta \text{ is a minimal vertex cover of the
	induced line } \D(x_1,\ldots ,x_{k-3})\Big\}\\
	&\bigcup \{x_{k-2},x_k,x_{k+1},x_{k+2},
	\ldots ,x_n\}.
	\end{align*}

	Let $\{C_{\gamma}\}$ be the collection of all maximal strong vertex cover which  
	contain the vertices $x_k$, $x_{k-1}$ and does not contain the vertex $x_{k+1}$. 
	Therefore,
	\begin{align*}
	 C_{\gamma}&=\Big\{C'_\gamma \mid C'_\gamma 
	\text{ is a minimal vertex cover of the induced line } \D(x_1,\ldots ,x_{k-4})\Big\}\\
	&\bigcup \{x_{k-3},x_{k-1},x_k,\newline x_{k+2},x_{k+3},\ldots ,x_n\}.
	\end{align*}

	For a fixed maximal strong vertex cover $C_{\lambda} $ of $\D$, let 
	$I_{\subseteq C_{\lambda}}$ denotes the intersection 
	$$I_{\subseteq C_{\lambda}}=\bigcap_{C\subseteq C_{\lambda}}I_C,$$ where $C\subseteq C_{\lambda}$ is a 
	strong vertex cover. 
	Then we have, 
	$$I^{(s)}=I(\D)^{(s)}=\bigcap_{\lambda}I_{\subseteq C_{\lambda}}^s = \bigcap_\alpha 
	(I_{\subseteq C_{\alpha}})^s  \bigcap_\beta (I_{\subseteq 
	C_{\beta}})^s\bigcap_\gamma (I_{\subseteq C_{\gamma}})^s.$$
	
	By similar arguments, as in the claim of Theorem \ref{forest}, 
	we have 
	\begin{align*}
	 I_{\subseteq C_{\alpha}}&=(C'_\alpha)+ J_1,
	I_{\subseteq C_{\beta}}=(C'_\beta)+ J_2 \text{ and }
	I_{\subseteq C_{\gamma}}=(C'_\gamma) + J_3,
	\end{align*}
 where 
 \begin{align*}
  J_1&=\left(\{x_{k-1}, x_{k+1}^{w_{k+1}}, x_{k+1}x_{k+2}^{w_{k+2}},\ldots,
	x_{n-1}x_n^{w_n}\}\right),\\
 J_2&=\left(\{x_{k-2},x_k^{w_k},x_kx_{k+1}^{w_{k+1}}, 
	x_{k+1}x_{k+2}^{w_{k+2}},\ldots ,x_{n-1}x_n^{w_n}\}\right),\\
J_3&=\left(\{x_{k-3}, x_{k-1},x_k,x_{k+2}^{w_{k+2}},x_{k+2}x_{k+3}^{w_{k+3}}, 
	\newline \ldots ,x_{n-1}x_n^{w_n}\}\right).	
 \end{align*}
	
	Then we have, $I_{\subseteq C_{\alpha}}=I_{C'_\alpha}+J_1$, $I_{\subseteq C_{\beta}}=I_{C'_\beta}+J_2$ and $I_{\subseteq C_{\gamma}}=I_{C'_\gamma}+J_3$.
	Now, $\bigcap\limits_\alpha (I_{\subseteq C_{\alpha}})^s=\bigcap\limits_\alpha 
	(I_{C'_\alpha} +J_1)^s=\sum\limits_{s_1+s_2=s}\bigcap\limits_\alpha 
	(I_{C'_\alpha}^{s_1})\cdot J_1^{s_2}=\sum\limits_{s_1+s_2=s}
	I(\D(x_1,\ldots ,x_{k-2}))^{(s_1)}\cdot J_1^{s_2}=\sum\limits_{s_1+s_2=s}
	I(\D(x_1,\ldots ,x_{k-2}))^{s_1}\cdot J_1^{s_2}$. 
	And similarly we get, 
	$\cap_\beta (I_{\subseteq C_{\beta}})^s =\sum\limits_{s_1+s_2=s}
	I(\D(x_1,\ldots ,x_{k-3}))^{s_1}\cdot J_2^{s_2}$ and 
	$\cap_\gamma (I_{\subseteq C_{\gamma}})^s=\sum\limits_{s_1+s_2=s}
	I(\D(x_1,\ldots ,x_{k-4}))^{s_1}\cdot J_3^{s_2}$.
	
	To show $\bigcap\limits_\alpha (I_{\subseteq C_{\alpha}})^s\bigcap\limits_\beta 
	(I_{\subseteq C_{\beta}})^s\bigcap\limits_\gamma (I_{\subseteq 
	C_{\gamma}})^s\subseteq I^s$ for all $s\geq 2$, it is enough to show 
	$J_1^s\cap J_2^s\cap J_3^s \subseteq I^s$ for all $s\geq 2$.
	This reduces to showing the fact that, $$(x_{k-1}, x_{k+1}^{w_{k+1}})^s\cap (x_{k-2},
	x_{k}^{w_{k}})^s\cap (x_{k-3}, x_{k-1},x_k,x_{k+2}^{w_{k+2}})^s\subseteq I^s \text{ for all $s\geq 1$.} $$ 
	 
	 We prove this by induction on $s$. 
	The case $s=1$ is true. 
	Note that any element of $(x_{k-1}, x_{k+1}^{w_{k+1}})^s\cap (x_{k-2}, 
	x_{k}^{w_{k}})^s\cap (x_{k-3}, x_{k-1},x_k,x_{k+2}^{w_{k+2}})^s$ is divisible by the 
	monomial $f=x_{k-3}^ax_{k-2}^mx_{k-1}^{\max\{i,b\}}x_k^{\max\{nw_{k},c\}}
	x_{k+1}^{jw_{k+1}}
	x_{k+2}^{dw_{k+2}}$, for some $i,j,m,n,a,b,c,$ $d \geq 0$ with  
	$i+j=s$, $m+n=s$ and $a+b+c+d=s$. 
	Suppose $\max\{i,b\}>0$. Then we have $x_{k-1}|f$.  
	If $m\neq 0$, then we also have $x_{k-2}|f$. Then divide the monomial $f$ by the 
	edge $x_{k-2}x_{k-1}$ to get $$f'=x_{k-3}^ax_{k-2}^{m-1}x_{k-1}^{\max\{i,b\}-1}
	x_k^{\max\{nw_{k},c\}}x_{k+1}^{jw_{k+1}}x_{k+2}^{dw_{k+2}}.$$ 
	Note that $f'\in (x_{k-1}, x_{k+1}^{w_{k+1}})^{s-1}\cap (x_{k-2}, 
	x_{k}^{w_{k}})^{s-1}\cap (x_{k-3}, x_{k-1},x_k,x_{k+2}^{w_{k+2}})^{s-1}$ and 
	hence by induction hypothesis, $f'\in I^{s-1}$. Therefore we have $f\in I^s$. 
	If $m=0$, then we have $n=s$, and so $f=x_{k-3}^ax_{k-2}^mx_{k-1}^{\max\{i,b\}}
	x_k^{sw_{k}}x_{k+1}^{jw_{k+1}}x_{k+2}^{dw_{k+2}}$. Then divide $f$ by the 
	edge $x_{k-1}x_k^{sw_{k}}$ to get $$f'=x_{k-3}^ax_{k-2}^m
	x_{k-1}^{\max\{i,b\}-1}x_k^{(s-1)w_{k}}x_{k+1}^{jw_{k+1}}x_{k+2}^{dw_{k+2}}.$$ 
	Again note that $f'\in (x_{k-1}, x_{k+1}^{w_{k+1}})^{s-1}\cap 
	(x_{k-2}, x_{k}^{w_{k}})^{s-1}\cap (x_{k-3}, x_{k-1},x_k,x_{k+2}^{w_{k+2}})^{s-1}$ 
	and hence by induction hypothesis we are done. 
	If $\max\{i,b\}=0$, then $i=b=0$. 
	We have $j=s$ and $f=x_{k-3}^ax_{k-2}^mx_k^{\max\{nw_{k},c\}}x_{k+1}^{sw_{k+1}}
	x_{k+2}^{dw_{k+2}}$. If $n\neq 0$ and $x_k|f$, then divide the monomial 
	$f$ by the edge $x_kx_{k+1}^{w_{k+1}}$ to get 
	$$f'=x_{k-3}^ax_{k-2}^mx_k^{\max\{nw_{k},c\}-1}x_{k+1}^{(s-1)
	w_{k+1}}x_{k+2}^{dw_{k+2}}.$$ 
	Note that $f'\in (x_{k-1}, x_{k+1}^{w_{k+1}})^{s-1}\cap (x_{k-2}, 
	x_{k}^{w_{k}})^{s-1}\cap (x_{k-3}, x_{k-1},x_k,x_{k+2}^{w_{k+2}})^{s-1}$ and hence by 
	induction hypothesis we are done. If $n=0$, then $m=s$ and we have 
	$$f=x_{k-3}^ax_{k-2}^sx_k^{c}x_{k+1}^{sw_{k+1}}x_{k+2}^{dw_{k+2}}= 
	(x_{k-3}x_{k-2})^a(x_{k-2}x_k)^{c}(x_{k+1}x_{k+2}^{w_{k+2}})^d\cdot x_{k-2}^{s-(a+c)}
	x_{k+2}^{sw_{k+2}-d}\in I^s$$ as $a+c+d=s$. This completes the proof.
\end{proof}

\bibliographystyle{abbrv}
\bibliography{refs_reg} 
\end{document}